\newtheorem{thm}{Theorem}[section]
\newtheorem{lem}[thm]{Lemma}
\newtheorem{prop}[thm]{Proposition}
\theoremstyle{definition}
\newtheorem{defn}[thm]{Definition}
\theoremstyle{remark} \theoremstyle{Proof}
\newtheorem{rem}[thm]{Remark}
\numberwithin{equation}{section}
\author[O. Ajebbar]{ Ajebbar Omar}
\address{ Ajebbar Omar\\Department of Mathematics\\
Ibn Zohr University, Faculty of Sciences, Agadir\\
Morocco} \email{omar-ajb@hotmail.com}
\author[E. Elqorachi]{ Elqorachi  Elhoucien}
\address{ Elqorachi Elhoucien\\Department of Mathematics\\
Ibn Zohr University, Faculty of Sciences, Agadir\\
Morocco} \email{elqorachi@hotmail.com}
\begin{document}

\title[A generalization of d'Alembert's functional equation]{A generalization of d'Alembert's functional equation on semigroups}

\keywords{Semigroup; involutive automorphism; Multiplicative
function; d'Alembert equation; Wilson equation.}

\thanks{2010 Mathematics Subject
Classification. Primary 39B52; Secondary 39B32}
\begin{abstract}
Given a semigroup $S$ generated by its squares equipped with an
involutive automorphism $\sigma$ and a multiplicative function
$\mu:S\to\mathbb{C}$ such that $\mu(x\sigma(x))=1$ for all $x\in S$,
we determine the complex-valued solutions of the following
functional equation
\begin{equation*}f(xy)-\mu(y)f(\sigma(y)x)=g(x)h(y),\quad x,y\in S,\end{equation*}
\end{abstract}
\maketitle
\section{Introduction}
The identity
$$\cos(x+y)+\cos(x-y)=2\cos x\cos y,\,x,y\in\mathbb{R},$$ also called
the cosine functional equation, is a starting point of d'Alembert's
functional equation
\begin{equation}\label{EQ01}g(x+y)+g(x-y)=2g(x)g(y),\,x,y\in\mathbb{R},\end{equation}
where $g:\mathbb{R}\to\mathbb{C}$ is the unknown function. The
functional equation (\ref{EQ01}) go back to d'Alembert's
investigations \cite{d'Albert0}, \cite{d'Albert1} and
\cite{d'Albert2}. It is well known that the continuous solutions
$g\neq0$ of (\ref{EQ01}) are the functions $g(x)=\dfrac{e^{\alpha
x}+e^{-\alpha x}}{2},\,x\in\mathbb{R}$, where $\alpha\in\mathbb{C}$
is a constant.\\The obvious extension of (\ref{EQ01}) from
$\mathbb{R}$ to an abelian group $(G,+)$ is the functional equation
\begin{equation}\label{EQ02}g(x+y)+g(x-y)=2g(x)g(y),\,x,y\in G,\end{equation}
where $g:\mathbb{G}\to\mathbb{C}$ is the unknown function. The
nonzero solutions of (\ref{EQ02}) are the functions of the form
$g(x)=\dfrac{\chi(x)+\chi(-x)}{2},\,x\in G$, where $\chi$ is a
character of $G$. The functional equation
\begin{equation}\label{EQ03}g(xy)+g(x\tau(y))=2g(x)g(y),\,x,y\in {G},\end{equation}
where $G$ is a group that need not be abelian, $\tau:G\to G$ is an
involution (i.e. $\tau(xy)=\tau(y)\tau(x)$ and $\tau(\tau(x))=x$ for
all $x,y\in G$)  is a generalization of (\ref{EQ02}).\\
The first result for non-abelian groups was obtained by Kannappan
\cite{Kannappan}. With $\tau(x)=x^{-1}$ for all $x\in G$, the
abelian solutions of equation  (\ref{EQ03}) (i.e. those satisfy the
Kannappan condition: $g(xyz)=g(yxz)$ for all $x,y,z\in G$) are of
the form $g(x)=\dfrac{m(x)+m(-x)}{2},\,x\in G$ where
$m:G\to\mathbb{C}$ is a multiplicative function. The solutions of
(\ref{EQ03}) were obtained by Davison \cite{Davison} for general
groups, even monoids.
\\ The complex valued-solutions of the
following variant of d'Alembert's functional equation
\begin{equation*}f(xy)+f(\sigma(y)x)=2f(x)f(y),\;x,y\in S,\end{equation*}
where $S$ is a semigroup and $\sigma:S\to S$ is an involutive
automorphism, was determined by Stetk{\ae}r \cite{Stetkaer3}.\\
At the same time Ebanks and Stetk{\ae}r \cite{Ebanks and Stetkaer2}
determined the complex-valued solutions of the functional equation
\begin{equation*}f(xy)-f(\sigma(y)x)=g(x)h(y)\;x,y\in M,\end{equation*}
where $M$ is a monoid generated by its squares and $\sigma:M\to M$
is an involutive automorphism.\\Bouikhalene and Elqorachi
\cite{Bouikhalene and Elqorachi} solved the functional equation
\begin{equation}\label{EQ1}f(xy)-\mu(y)f(\sigma(y)x)=g(x)h(y),\,x,y\in M,\end{equation}
where $M$ is a monoid generated by its squares or a group,
$\sigma:M\to M$ is an involutive automorphism and
$\mu:M\to\mathbb{C}$ is a multiplicative function such that
$\mu(x\sigma(x))=1$ for all $x\in M$.\\For more details about
(\ref{EQ03}) we refer to \cite{Parnami et al.}, \cite[Proposition
2.11]{Davison}, \cite[Lemme IV.4]{Stetkaer4} and \cite[Proposition
4.2]{Yang}.\\The identity element of the monoid or the group is used
in the proofs of \cite[Theorem 2.1, Proposition 2.2, Theorem
3.2]{Bouikhalene and Elqorachi} and \cite[ Proposition 4.1, Theorem
4.3]{Ebanks and Stetkaer2}.\\
Similar functional equations were treated in Chapter 13 of the book
\cite{Acz�l} by Acz\'{e}l and Dhombres.\\The present paper shows
that the identity element is not crucial by given proofs in the
setting of general semigroups. Emphasize that $\sigma$ in the
present paper is a homomorphism, not an anti-homomorphism like the
group inversion found in other papers.
\\Although we use similar computations to the ones in \cite{Bouikhalene and Elqorachi} and \cite{Ebanks and
Stetkaer2} the general setting of (\ref{EQ1}) is for $S$ to be a
semigroup generated by its squares, because to formulate the
functional equation equation (\ref{EQ1}) and obtain some key
properties of solutions like centrality and parity we need only an
associative composition in $S$ and the assumption that the semigroup
$S$ is generated by its squares, not an identity element and
inverses. The main new feature of the present paper is that we do
not assume the underlying semigroup has an identity. That makes the
exposition more involved and explains why our proofs are longer that
those of previous papers about the same functional equations. Thus
we study in the present paper (\ref{EQ1}) extending works in which
$S$ is a group or a monoid.
\par The organization of the paper is as follows. In the next
section we give notations and terminology. In the third section we
give preliminary results that we need in the paper. In section 4 we
prove our main results. \par The explicit formulas for the solutions
are expressed in terms of multiplicative and additive functions.
\section{Notations and Terminology}
\begin{defn} Let $f:S\to\mathbb{C}$ be a function on a
semigroup $S$. We say that\\
$f$ is additive if $f(xy)=f(x)+f(y)$ for all $x,y\in S$.\\
$f$ is multiplicative if $f(xy)=f(x)f(y)$ for all $x,y\in S$.\\
$f$ is central if $f(xy)=f(yx)$ for all $x,y\in S$.
\end{defn}
If $S$ is a semigroup, $\sigma:S\to S$ an involutive automorphism
and $\mu:S\to\mathbb{C}$ a multiplicative function such that
$\mu(x\sigma(x))=1$ for all $x\in S$, we define the nullspace
$$\mathcal{N}_{\mu}(\sigma,S):=\{\theta:S\to\mathbb{C}\mid
\theta(xy)-\mu(y)\theta(\sigma(y)x)=0,\,x,y\in S\}.$$If
$\chi:S\to\mathbb{C}$ is a multiplicative function and $\chi\neq0$,
then $I_{\chi}:=\{x\in S\mid\chi(x)=0\}$ is either empty or a proper
subset of $S$. $I_{\chi}$ is a two sided ideal in $S$ if not empty
and $S\setminus I_{\chi}$ is a subsemigroup of $S$. Notice that
$I_{\chi}$ is also a subsemigroup of $S$.\\
For any function $F:S\to\mathbb{C}$ we define the function
$$F^{*}(x)=\mu(x)F(\sigma(x)),\,x\in S.$$
Let $f:S\to\mathbb{C}$. We call $f^{e}:=\dfrac{f+f^{*}}{2}$ the even
part of $f$ and
$f^{o}:=\dfrac{f-f^{*}}{2}$ its odd part. The function $f$ is said to be even if $f=f^{*}$, and $f$ is said to be odd if $f=-f^{*}$.\\
If $g,h:S\to\mathbb{C}$ are two functions we define the function
$(g\otimes h)(x,y):=g(x)h(y),\,x,y\in S$.\\ \textbf{Blanket
assumption:} Throughout this paper $S$ denotes a semigroup (a set
with an associative composition) generated by its squares. The map
$\sigma:S\to S$ denotes an involutive automorphism. That $\sigma$ is
involutive means that $\sigma(\sigma(x))=x$ for all $x\in S$. We
denote by $\mu:S\to\mathbb{C}$ a multiplicative function such that
$\mu(x\sigma(x))=1$ for all $x\in S$.
\section{$\mu$-sine subtraction law on a semigroup generated by its squares}
In this section we extend the results obtained in \cite[Theorem 2.1,
Proposition 2.2]{Bouikhalene and Elqorachi}, \cite[Theorem
4.12]{Stetkaer2} and \cite[Theorem 3.2 and Lemma 3.4]{Ebanks and
Stetkaer2} on groups and monoids to semigroups generated by their
squares by solving the $\mu$-sine subtraction law
\begin{equation}\label{Eq6-1}\mu(y)k(x\sigma(y))=k(x)l(y)-k(y)l(x),\,x,y\in
S.\end{equation}
\begin{lem}\label{lem61}Let $G$ be a semigroup such that $G=\{xy\,|\,x,y\in
G\}$. Let $f$ and $F$ be functions on $G$ such that $f(xy)=F(yx)$
for all $x,y\in G$. Then $f=F$. In particular $f$ is central.
\end{lem}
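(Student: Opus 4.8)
The plan is to exploit the hypothesis $G=\{xy\mid x,y\in G\}$ to compensate for the missing identity element: if $G$ had an identity $e$, one could simply put $y=e$ in $f(xy)=F(yx)$ to get $f=F$ at once, so the real content of the lemma is to reproduce this conclusion using only that every element is a product. The strategy is first to establish that $F$ is invariant under cyclic permutations of threefold products, and then to deduce both $f=F$ and the centrality of $f$ from this invariance together with a single factorization of each element.

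First I would record the key identity. Fixing $a,b,c\in G$ and writing the triple product in its two associated ways, the hypothesis $f(xy)=F(yx)$ gives $f((ab)c)=F(c(ab))=F(cab)$ and $f(a(bc))=F((bc)a)=F(bca)$. Since the left-hand sides agree by associativity, I obtain $F(cab)=F(bca)$ for all $a,b,c$; that is, $F$ is unchanged when the last factor of a threefold product is moved to the front. Iterating this shows that $F$ is invariant under all cyclic permutations of products of three elements.

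Next I would prove $f=F$. Given $t\in G$, the assumption on $G$ lets me write $t=xy$ and then $x=pq$, so that $t=pqy$ is a genuine threefold product. Applying the hypothesis once, $f(t)=f((pq)y)=F(y(pq))=F(ypq)$, and the cyclic invariance just established gives $F(ypq)=F(pqy)=F(t)$; hence $f(t)=F(t)$. As $t$ was arbitrary, $f=F$. The centrality of $f$ is then immediate: for any $x,y\in G$ one has $f(xy)=F(yx)=f(yx)$, using $f=F$ in the last step.

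The only delicate point is the absence of an identity, which is precisely what forces the detour through the factorization $x=pq$ and the cyclic-invariance identity rather than a one-line substitution. Everything else is a direct consequence of associativity and the defining relation $f(xy)=F(yx)$, so I expect no further obstacle.
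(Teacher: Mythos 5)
Your proposal is correct and takes essentially the same route as the paper: the paper's one-line chain $f(xyz)=F(zxy)=f(y(zx))=F(xyz)$ is exactly your two steps (hypothesis applied to the two bracketings of a triple product, i.e.\ your cyclic-invariance identity) merged together, and both proofs then invoke the factorization of every element of $G$ as a threefold product, obtained by applying $G=\{xy\,|\,x,y\in G\}$ twice, before deducing centrality from $f(xy)=F(yx)=f(yx)$. There is no gap.
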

\begin{proof} For any $x,y,z\in G$ we have
$f(xyz)=f((xy)z)))=F(zxy)=F((zx)y)=f(y(zx))=f((yz)x)=F(x(yz))=F(xyz)$.
Applying the assumption on $G$ twice we see that any element of $G$
can be written as $xyz$, where $x,y,z\in G$. So $f=F$ and then $f$
is central. This finishes the proof.
\end{proof}
\begin{rem}\label{rem1} The proof of Lemma \ref{lem61} works also
for if $G$ is regular semigroup, which by definition means that for
each $a\in G$ there exists an element $x\in G$ such that $a=axa$.
\end{rem}
\begin{prop}\label{prop61} The solutions of the functional equation
(\ref{Eq6-1}) with $k\neq0$ are the followings pairs\\
(1) $k=c_{1}\frac{\chi-\chi^{*}}{2}$,
$l=\frac{\chi+\chi^{*}}{2}+c_{2}\frac{\chi-\chi^{*}}{2}$ where
$\chi:S\to\mathbb{C}$ is a multiplicative function and
$c_{1}\in\mathbb{C}\setminus\{0\}$, $c_{2}\in\mathbb{C}$ are
constants such that $\chi^{*}\neq\chi$,\\
(2) \[ \left\{
\begin{array}{r c l}
k&=&\chi\,A,\quad$$l=\chi(1+cA)$$\quad\text{on}\quad$$S\setminus I_{\chi},$$\\
k&=&0,\quad$$l=0$$\quad\text{on}\quad$$I_{\chi},$$\\
\end{array}
\right.
\]
where $c\in\mathbb{C}$ is a constant, $\chi:S\to\mathbb{C}$ is a
nonzero multiplicative function and $A:S\setminus
I_{\chi}\to\mathbb{C}$ is a nonzero additive function such that
$\chi^{*}=\chi$ and $A\circ\sigma=-A$.
\end{prop}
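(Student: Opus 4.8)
The plan is to reduce equation \eqref{Eq6-1} to a sine addition law for $k$, to solve that by the known classification, and then to read off $l$. Throughout I will use that $\mu$ never vanishes: since $\mu(x)\mu(\sigma(x))=\mu(x\sigma(x))=1$ we have $\mu(x)\neq0$ and $\mu(\sigma(x))=\mu(x)^{-1}$ for every $x\in S$. I will also use that $S$ being generated by its squares gives $S=\{xy\mid x,y\in S\}$, so that Lemma \ref{lem61} is available.

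The first and decisive step is to show that $k$ is central and odd. Interchanging $x$ and $y$ in \eqref{Eq6-1} and using the antisymmetry of the right-hand side yields $\mu(x)k(y\sigma(x))=-\mu(y)k(x\sigma(y))$. Replacing $y$ by $\sigma(y)$ in this identity, using $\sigma(y)\sigma(x)=\sigma(yx)$ and $\sigma\circ\sigma=\mathrm{id}$, and then multiplying by $\mu(y)$, the factor $\mu(y)\mu(\sigma(y))=1$ collapses and I obtain $k^{*}(yx)=-k(xy)$ for all $x,y\in S$. Writing this as $k(xy)=(-k^{*})(yx)$ and applying Lemma \ref{lem61} with $f=k$ and $F=-k^{*}$ gives at once $k=-k^{*}$ (so $k$ is odd) together with the centrality of $k$. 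This is precisely the point at which earlier papers invoked an identity element; replacing that device by Lemma \ref{lem61} is the main new idea.

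Next I reduce to a sine addition law. Replacing $y$ by $\sigma(y)$ in \eqref{Eq6-1}, multiplying by $\mu(y)$, and using $k^{*}=-k$ gives $k(xy)=k(x)l^{*}(y)+k(y)l(x)$. Comparing this with the same identity written for $k(yx)$ and invoking the centrality of $k$ produces $k(x)(l^{*}-l)(y)=k(y)(l^{*}-l)(x)$; since $k\neq0$, fixing a point where $k$ is nonzero shows $l^{*}-l=\beta k$ for some constant $\beta\in\mathbb{C}$. Setting $m:=l+\tfrac{\beta}{2}k$ then turns the previous identity into the sine addition law
\[ k(xy)=k(x)m(y)+k(y)m(x),\qquad x,y\in S, \]
and a direct computation from $l^{*}=l+\beta k$ and $k^{*}=-k$ shows $m^{*}=m$, i.e. $m$ is even.

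It remains to apply the classification of the sine addition law on a semigroup generated by its squares (as in \cite{Ebanks and Stetkaer2}, \cite{Stetkaer2}) to the pair $(k,m)$ with $k\neq0$, and to translate back through $l=m-\tfrac{\beta}{2}k$. In the nondegenerate case one has $m=\tfrac{\chi_{1}+\chi_{2}}{2}$ and $k=c\,\tfrac{\chi_{1}-\chi_{2}}{2}$ with distinct multiplicative $\chi_{1},\chi_{2}$ and $c\neq0$; the evenness $m^{*}=m$ forces $\{\chi_{1}^{*},\chi_{2}^{*}\}=\{\chi_{1},\chi_{2}\}$, and then $k^{*}=-k$ forces the swap $\chi_{2}=\chi_{1}^{*}$, which with $\chi:=\chi_{1}$ yields family (1). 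In the degenerate case $m=\chi$ is multiplicative with $k=\chi A$ on $S\setminus I_{\chi}$ and $k=0$ on $I_{\chi}$, where $A$ is additive; here $m^{*}=m$ gives $\chi^{*}=\chi$ and $k^{*}=-k$ gives $A\circ\sigma=-A$, yielding family (2), and a final substitution verifies that both families indeed solve \eqref{Eq6-1}. I expect the principal obstacles to be the first step — extracting centrality and oddness of $k$ in the absence of an identity element — and the careful bookkeeping on the ideal $I_{\chi}$ in the degenerate case, where $\chi$ may vanish and $A$ is defined only on $S\setminus I_{\chi}$.
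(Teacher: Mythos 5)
Your proposal is correct and follows essentially the same route as the paper: oddness and centrality of $k$ via Lemma \ref{lem61}, reduction to a sine addition law with an even companion function, then the known classification of that law and translation back to $l$. The only cosmetic difference is that you derive the reduction explicitly (your $m=l+\tfrac{\beta}{2}k$ is exactly the paper's $l^{e}$, and your $l^{*}-l=\beta k$ is the paper's $l^{o}=ck$ with $\beta=-2c$), whereas the paper cites this computation from \cite{Bouikhalene and Elqorachi}.
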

\begin{proof} Let $x,y\in S$ be arbitrary. By interchanging $x$ and $y$ in (\ref{Eq6-1})
we get the identity $\mu(y)k(x\sigma(y))=-\mu(x)k(y\sigma(x))$,
which, applied to the pair $(x,\sigma(y))$, read
$\mu(\sigma(y))k(xy)=-\mu(x)k(yx)$. Multiplying this by $\mu(y)$ and
using that $\mu:S\to\mathbb{C}$ is a multiplicative function and
that $\mu(y\sigma(y))=1$, we get that $k(xy)=-k^{*}(yx)$. So, $x$
and $y$ being arbitrary, we deduce, according to Lemma \ref{lem61},
that
$k=-k^{*}$ and $k$ is central.\\
On the other hand, by using the same computation used by Bouikhalene
and Elqorachi in the proof of \cite[Theorem 2.1]{Bouikhalene and
Elqorachi} we get that there exists a constant $c\in\mathbb{C}$ such
that
\begin{equation}\label{eq7-2}l^{o}=ck\end{equation}
and that the pair $(k,l^{e})$ satisfies the sine addition law
\begin{equation*}k(xy)=k(x)l^{e}(y)+k(y)l^{e}(x),\,x,y\in S,\end{equation*}
hence, according to \cite[Proposition 1.1]{Bouikhalene and
Elqorachi}, that
pair falls into two categories:\\
(i) $k=c_{1}\frac{\chi_{1}-\chi_{2}}{2}$ and
$l^{e}=\frac{\chi_{1}+\chi_{2}}{2}$, where
$\chi_{1},\chi_{2}:S\to\mathbb{C}$ are different multiplicative
functions and $c_{1}\in\mathbb{C}\setminus\{0\}$ is a constant.
Since $k^{*}=-k$, $\mu l^{e}=l^{e}$, and $c_{1}\neq0$ a small
computation shows that $\chi_{2}=\chi_{1}^{*}$. Defining
$\chi:=\chi_{1}$ and $c_{2}:=c_{1}c\in\mathbb{C}$, and using that
$l=l^{e}+l^{o}$ we get that
$l=\frac{\chi+\chi^{*}}{2}+c_{2}\frac{\chi-\chi^{*}}{2}$ , where
$\chi:S\to\mathbb{C}$ is a multiplicative function such that
$\chi\neq\chi^{*}$ and $c_{1}\in\mathbb{C}\setminus\{0\}$,
$c_{2}\in\mathbb{C}$ are constants. The result occurs
in (1) of Proposition \ref{prop61}.\\
(ii) \[ \left\{
\begin{array}{r c l}
k&=&\chi\,A,\quad$$l^{e}=\chi$$\quad\text{on}\quad$$S\setminus I_{\chi},$$\\
k&=&0,\quad$$l^{e}=0$$\quad\text{on}\quad$$I_{\chi},$$\\
\end{array}
\right.
\]
where $\chi:S\to\mathbb{C}$ is a nonzero multiplicative function and
$A:S\setminus I_{\chi}\to\mathbb{C}$ is a nonzero additive function.
So, taking (\ref{eq7-2}) into account, we get that $l^{o}=c\chi\,A$
on $S\setminus I_{\chi}$ and $l^{o}=0$ on $I_{\chi}$. Hence
\[ \left\{
\begin{array}{r c l}
l&=&\chi+c\chi\,A=\chi(1+cA)\quad\text{on}\quad$$S\setminus I_{\chi},$$\\
l&=&0,\quad\text{on}\quad$$I_{\chi}.$$\\
\end{array}
\right.
\]
Notice that $l^{e}(x)=\chi(x)$ for all $x\in S$. As $\mu
l^{e}\circ\sigma=l^{e}$ we have $\chi^{*}=\chi$ and then
$\sigma(S\setminus I_{\chi})=S\setminus I_{\chi}$. So, using that
$k^{*}=-k$ and the fact that $\chi(x)\neq0$ for all $x\in S\setminus
I_{\chi}$, we get that $A\circ\sigma=-A$. The result occurs in part
(2) of Proposition \ref{prop61}.
\par Conversely, if $k$ and $l$ are of the forms (1)-(2) in Proposition
(\ref{prop61}) we check by elementary computations that the pair
$(k,l)$ is a solution of Eq. (\ref{Eq6-1}) with $k\neq0$. This
completes the proof of Proposition \ref{prop61}.
\end{proof}
\begin{rem}\label{rem2} \cite[Proposition 1.1]{Bouikhalene and
Elqorachi} corresponds to \cite[Lemma 3.4]{Ebanks and Stetkaer2}.
So, like in \cite[Remark 3.5]{Ebanks and Stetkaer2}, the Proposition
\ref{prop61} is also valid if $S=\{xy\,|\,x,y\in S\}$, or if $S$ is
a regular semigroup.
\end{rem}
\section{Solutions of Eq. (\ref{EQ1}) on a semigroup generated by its squares}
The following Lemma will be used later.
\begin{lem}\label{lem621} Let $G$ be a semigroup, $\tau:G\to G$ be an involutive automorphism and $\nu:G\to\mathbb{C}$ be a
multiplicative function such that $\nu(x\tau(x))=1$ for all $x\in
G$, and
$\theta\in\mathcal{N}_{\nu}(\tau,G)$.\\
(1) $\theta=\theta^{*}$ on $\{xy\,|\,x,y\in G\}$, i.e., $\theta$ is
even on products.\\
(2) If $\theta$ is odd, i.e., $\theta=-\theta^{*}$, then $\theta=0$
on $\{xy\,|\,x,y\in G\}$. In particular $\theta=0$ if $G$ is a
regular semigroup, or if $G$ is generated by its squares.
\end{lem}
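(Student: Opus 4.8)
The plan is to obtain (1) directly from two applications of the defining identity of $\mathcal{N}_{\nu}(\tau,G)$ and then to read off (2) as an immediate consequence. Recall that $\theta\in\mathcal{N}_{\nu}(\tau,G)$ means $\theta(ab)=\nu(b)\,\theta(\tau(b)a)$ for all $a,b\in G$, and that in this setting $\theta^{*}(x)=\nu(x)\,\theta(\tau(x))$.

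For part (1), I fix $x,y\in G$ and apply the identity to the pair $(x,y)$, obtaining $\theta(xy)=\nu(y)\,\theta(\tau(y)x)$. The key move is to apply the same identity a second time to the right-hand side, now regrouping $\tau(y)x$ as the product of $\tau(y)$ and $x$: applying the identity to the pair $(\tau(y),x)$ gives $\theta(\tau(y)x)=\nu(x)\,\theta(\tau(x)\tau(y))$. Substituting and using that $\nu$ is multiplicative, so $\nu(x)\nu(y)=\nu(xy)$, together with the fact that $\tau$ is an automorphism, hence a homomorphism, so $\tau(x)\tau(y)=\tau(xy)$, yields $\theta(xy)=\nu(xy)\,\theta(\tau(xy))=\theta^{*}(xy)$. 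As $x,y$ are arbitrary, $\theta=\theta^{*}$ on $\{xy\mid x,y\in G\}$.

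For part (2), assume $\theta$ is odd, i.e. $\theta=-\theta^{*}$ on all of $G$. On any product $xy$, part (1) gives $\theta(xy)=\theta^{*}(xy)$ while oddness gives $\theta(xy)=-\theta^{*}(xy)$; together these force $\theta^{*}(xy)=0$ and hence $\theta(xy)=0$, so $\theta$ vanishes on $\{xy\mid x,y\in G\}$. The final claim then follows by pushing every element into this product set: if $G$ is regular, each $a=(ax)a$ is a product; if $G$ is generated by its squares, each generating square $x^{2}=x\cdot x$ is already a product and products of products remain products, so $G=\{xy\mid x,y\in G\}$. In either case $\theta\equiv 0$.

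The computation presents no serious obstacle; the one point requiring care is the correct regrouping in the second application of the defining identity and the recognition that it is precisely the homomorphism property of $\tau$ (as opposed to an anti-homomorphism property, such as group inversion) that produces $\tau(x)\tau(y)=\tau(xy)$ and closes the chain back to $\theta^{*}(xy)$. The only other thing to watch is the passage from ``$\theta=0$ on products'' to ``$\theta\equiv 0$'', which genuinely relies on regularity or the generated-by-squares hypothesis and would fail for an arbitrary semigroup.
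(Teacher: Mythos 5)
Your proof is correct and follows essentially the same route as the paper: two applications of the defining identity of $\mathcal{N}_{\nu}(\tau,G)$ (to the pairs $(x,y)$ and then $(\tau(y),x)$), combined with the multiplicativity of $\nu$ and the homomorphism property of $\tau$, give $\theta(xy)=\theta^{*}(xy)$, and part (2) follows by combining this with oddness and the observation that regularity or generation by squares makes every element of $G$ a product of two elements. No gaps; your explicit justification that $G=\{xy\mid x,y\in G\}$ in both cases is slightly more detailed than the paper's, which simply asserts it.
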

\begin{proof}
(1) For all $x,y\in G$ we have
$$\theta(xy)=\nu(y)\theta(\tau(y)x)=\nu(y)\nu(x)\theta(\tau(x)\tau(y))=\nu(xy)\theta(\tau(xy))=\theta^{*}(xy),$$
which proves (1).\\
(2) If $\theta$ is odd then $\theta=-\theta^{*}$. So, according to
Lemma \ref{lem621}(1), $\theta=0$ on $\{xy\,|\,x,y\in G\}$. If $G$
is a regular semigroup or if $G$ is generated by its squares, then
any element of $G$ can be written as $xy$ where $x,y\in G$. Hence
$\theta=0$. This finishes the proof.
\end{proof}
In Lemma \ref{lem62} below we give some key properties of solutions
of the functional equation (\ref{EQ1}).
\begin{lem}\label{lem62} Let $f,g,h:S\to\mathbb{C}$ be a
solution of the functional equation (\ref{EQ1}). Suppose that
$g\neq0$ and $h\neq0$. Then\\
(1) There exists a function $l:S\to\mathbb{C}$ such that
\begin{equation}\label{eq72-1}h(xy)=g^{*}(x)l(y)-g(y)l^{*}(x)\end{equation} for all $x,y\in
S$.\\
(2) $h^{*}=-h$ and $h$ is central.\\
(3)
\begin{equation}\label{eq72-2}f^{e}(xy)-\mu(y)f^{e}(\sigma(y)x)=g^{o}(x)h(y)\end{equation}
for all $x,y\in S$.\\
(4)
\begin{equation}\label{eq72-3}f^{o}(xy)-\mu(y)f^{o}(\sigma(y)x)=g^{e}(x)h(y)\end{equation}
for all $x,y\in
S$.\\
(5)
\begin{equation}\label{eq72-4}2f^{o}(xy)=g^{e}(x)h(y)+g^{e}(y)h(x)\end{equation}
for all $x,y\in
S$. In particular $f^{o}$ is central.\\
(6) There exists a constant $b\in\mathbb{C}$ such that $g^{o}=bh$.
\end{lem}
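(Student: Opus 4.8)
The plan is to produce a second, independent formula for $2f^{o}(yx)$ directly from the original equation (\ref{EQ1}) and to compare it with the symmetric formula (\ref{eq72-4}) already obtained in part (5); the discrepancy between the two collapses to the proportionality $g^{o}=bh$.

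First I would extract an auxiliary identity from (\ref{EQ1}) by a substitution. Replacing $(x,y)$ by $(\sigma(y),\sigma(x))$ in (\ref{EQ1}), using that $\sigma$ is an involutive automorphism (so $\sigma(y)\sigma(x)=\sigma(yx)$ and $\sigma(\sigma(x))=x$), and then multiplying through by $\mu(yx)=\mu(y)\mu(x)$, I obtain
\[f^{*}(yx)-\mu(y)f(x\sigma(y))=g^{*}(y)h^{*}(x),\]
where the coefficient $\mu(yx)\mu(\sigma(x))$ collapses to $\mu(y)$ thanks to $\mu(x\sigma(x))=1$, and the right-hand side is rewritten using the definition $F^{*}(x)=\mu(x)F(\sigma(x))$. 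Separately, applying (\ref{EQ1}) to the pair $(x,\sigma(y))$ and multiplying by $\mu(y)$, the identity $\mu(y\sigma(y))=1$ together with $h^{*}=-h$ from part (2) gives $\mu(y)f(x\sigma(y))=f(yx)-g(x)h(y)$.

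Substituting this last relation into the displayed identity, and using $h^{*}=-h$ once more on the right, I reach $f^{*}(yx)-f(yx)=-g^{*}(y)h(x)-g(x)h(y)$; since $f^{*}-f=-2f^{o}$ this is
\[2f^{o}(yx)=g^{*}(y)h(x)+g(x)h(y).\]
Relabelling $x\leftrightarrow y$ (the identity holds for all $x,y\in S$) turns this into $2f^{o}(xy)=g^{*}(x)h(y)+g(y)h(x)$, which I then subtract from (\ref{eq72-4}), namely $2f^{o}(xy)=g^{e}(x)h(y)+g^{e}(y)h(x)$. Because $g^{*}-g^{e}=-g^{o}$ and $g^{e}-g=-g^{o}$, the difference simplifies to $g^{o}(x)h(y)=g^{o}(y)h(x)$ for all $x,y\in S$. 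Since $h\neq0$, I fix $x_{0}\in S$ with $h(x_{0})\neq0$ and set $b:=g^{o}(x_{0})/h(x_{0})$; then $g^{o}(x)=bh(x)$ for every $x$, i.e.\ $g^{o}=bh$.

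I expect the only delicate point to be the bookkeeping in the derivation of the auxiliary identity: tracking where $\sigma$ and the various factors of $\mu$ land, and inserting $\mu(x\sigma(x))=1$ and $h^{*}=-h$ at precisely the right places. Once the formula $2f^{o}(yx)=g^{*}(y)h(x)+g(x)h(y)$ is secured, the comparison with (\ref{eq72-4}) and the concluding choice of $x_{0}$ are routine.
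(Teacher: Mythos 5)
Your argument addresses only part (6) of the lemma: it takes $h^{*}=-h$ (part (2)) and the symmetric formula (\ref{eq72-4}) (part (5)) as already established, and derives $g^{o}=bh$ from them. The lemma, however, consists of six assertions, and the bulk of the work lies precisely in the parts you assume. Part (1) requires substituting the three pairs $(x,yz)$, $(\sigma(y),\sigma(z)x)$, $(z,\sigma(xy))$ into (\ref{EQ1}) and combining the results to obtain $g(x)h(yz)+\mu(yz)g(\sigma(y))h(\sigma(z)x)+\mu(xy)g(z)h(\sigma(xy))=0$, from which the representation $h(yz)=g^{*}(y)l(z)-g(z)l^{*}(y)$ is extracted (including the non-trivial step showing that the second auxiliary function equals $-l^{*}$). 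Part (2) then needs Lemma \ref{lem61}: oddness and centrality of $h$ are first obtained only on products, and this is exactly where the hypothesis that $S$ is generated by its squares enters. Parts (3), (4) and (5) are computational but still must be carried out, and your own derivation cannot even begin without (2) and (5). So, as a proof of the stated lemma, the proposal has a genuine gap: five of the six claims are unproven, including the two deepest ones, and nothing in your text indicates how they would be supplied.

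The part you do prove is correct, and your route to (6) is a mild variant of the paper's. You build a second expression $2f^{o}(xy)=g^{*}(x)h(y)+g(y)h(x)$ directly from (\ref{EQ1}) together with part (2), and subtract it from (\ref{eq72-4}) to get $g^{o}(x)h(y)=g^{o}(y)h(x)$; the paper obtains the same skew-symmetric relation by applying (\ref{eq72-2}) (part (3)) to the pairs $(x,\sigma(y))$ and $(y,x)$ and adding, never invoking part (5). Your bookkeeping of the $\mu$-factors and the identities $f^{*}-f=-2f^{o}$, $g^{*}-g^{e}=-g^{o}$, $g^{e}-g=-g^{o}$ checks out, so if you supply proofs of (1)--(5) along the paper's lines, your ending for (6) can stand in for the paper's without change.
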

\begin{proof} (1) We use similar computations to those of the proof of Proposition 3 in \cite{Ebanks and Stetkaer2}.\\
Let $x,y,z\in S$ be arbitrary. By applying Eq (\ref{EQ1}) to the
pairs $(x,yz)$, $(\sigma(y),\sigma(z)x)$ and $(z,\sigma(xy))$ we
obtain
\begin{equation}\label{eq7-4}f(xyz)-\mu(yz)f(\sigma(yz)x)=g(x)h(yz),\end{equation}
\begin{equation}\label{eq7-5}f(\sigma(yz)x)-\mu(\sigma(z)x)f(z\sigma(xy))=g(\sigma(y))h(\sigma(z)x),\end{equation}
\begin{equation}\label{eq7-6}f(z\sigma(xy))-\mu(\sigma(xy))f(xyz)=g(z)h(\sigma(xy)).\end{equation}
By multiplying (\ref{eq7-5}) by $\mu(yz)$, and (\ref{eq7-6}) by
$\mu(xy)$ we get that
\begin{equation}\label{eq7-7}\mu(yz)f(\sigma(yz)x)-\mu(xy)f(z\sigma(xy))=\mu(yz)g(\sigma(y))h(\sigma(z)x),\end{equation}
\begin{equation}\label{eq7-8}\mu(xy)f(z\sigma(xy))-f(xyz)=\mu(xy)g(z)h(\sigma(xy)).\end{equation}
By adding (\ref{eq7-4}), (\ref{eq7-7}) and (\ref{eq7-8}) we obtain
\begin{equation}\label{eq7-9}g(x)h(yz)+\mu(yz)g(\sigma(y))h(\sigma(z)x)+\mu(xy)g(z)h(\sigma(xy))=0.\end{equation}
As $g\neq0$ we chose $x_{0}\in S$ such that $g(x_{0})\neq0$. So, by
putting $x=x_{0}$ we deduce from (\ref{eq7-9}) that
\begin{equation*}\begin{split}h(yz)&=-g(x_{0})^{-1}\mu(yz)g(\sigma(y))h(\sigma(z)x_{0})-g(x_{0})^{-1}\mu(x_{0}y)g(z)h(\sigma(x_{0}y))\\
&=g^{*}(y)(-g(x_{0})^{-1}\mu(z)h(\sigma(z)x_{0}))+g(z)(-g(x_{0})^{-1}\mu(x_{0}y)h(\sigma(x_{0}y))).\end{split}\end{equation*}
So, $y$ and $z$ being arbitrary, we get from the last identity, by
defining the functions $l(z):=-g(x_{0})^{-1}\mu(z)h(\sigma(z)x_{0})$
and $l_{1}(z):=-g(x_{0})^{-1}\mu(x_{0}z)h(\sigma(x_{0}z))$ for all
$z\in S$, that
\begin{equation}\label{eq7-10}h(yz)=g^{*}(y)l(z)+g(z)l_{1}(y)\end{equation}
for all $y,z\in S$.\\
Using (\ref{eq7-10}) and (\ref{eq7-9}) we obtain
\begin{equation}\label{eq7-11}\begin{split}0&=g(x)[g^{*}(y)l(z)+g(z)l_{1}(y)]+\mu(yz)g(\sigma(y))[\mu(\sigma(z))g(z)l(x)\\&+g(x)l_{1}(\sigma(z))]
+\mu(xy)g(z)[\mu(\sigma(x))g(x)l(\sigma(y))+g(\sigma(y))l_{1}(\sigma(x))]\\
&=g(x)g(\sigma(y))[\mu(y)l(z)+\mu(yz)l_{1}(\sigma(z))]+g(x)g(z)[l_{1}(y)+l^{*}(y)]\\
&+g(\sigma(y))g(z)[\mu(y)l(x)+\mu(xy)l_{1}(\sigma(x))],\end{split}\end{equation}
for all $x,y,z\in S$. By putting $x=x_{0}$ and $y=\sigma(x_{0})$ in
the identity above we get that
\begin{equation*}g(x_{0})^{2}\mu(\sigma(x_{0}))[l(z)+\mu(z)l_{1}(\sigma(z))]+2g(x_{0})g(z)[l_{1}(\sigma(x_{0}))+\mu(\sigma(x_{0}))l(x_{0}))]=0.\end{equation*}
As $g(x_{0})\mu(x_{0})\neq0$ we deduce from the identity above that
there exists a constant $c\in\mathbb{C}$ such that
\begin{equation}\label{eq7-12}l(z)+\mu(z)l_{1}(\sigma(z))=cg(z),\end{equation}
for all $z\in S$. From (\ref{eq7-11}) and (\ref{eq7-12}) we get, by
a small computation, that
$$3cg(x)g(\sigma(y))g(z)=0$$ for all $x,y,z\in S$, which implies that
$c=0$ because $g\neq0$. Then, using that $\mu(z\sigma(z))=1$ for all
$z\in S$, we infer from (\ref{eq7-12}), that $l_{1}=-l^{*}$. Hence,
(\ref{eq7-10}) becomes
$$h(yz)=g^{*}(y)l(z)-g(z)l^{*}(y)$$
for all $y,z\in S$, which occurs in (1) Lemma \ref{lem62}.\\
(2) Let $x,y\in S$ be arbitrary. By applying (\ref{eq72-1}) to the
pair $(\sigma(y),\sigma(x))$ and multiplying the identity obtained
by $\mu(xy)$ we get that
$$\mu(xy)h\circ\sigma(yx)=g(y)l^{*}(x)-g^{*}(x)l(y)=-h(xy).$$
Proceeding exactly as in the proof of the Proposition \ref{prop61}
we deduce that $h^{*}=-h$ and $h$ is central. This is the
result (2) of Lemma \ref{lem62}.\\
(3) Let $x,y\in S$ be arbitrary. We have
\begin{equation*}\begin{split}&f^{e}(xy)-\mu(y)f^{e}(\sigma(y)x)=\frac{f(xy)+\mu(xy)f(\sigma(x)\sigma(y))}{2}\\&-\mu(y)\frac{f(\sigma(y)x)+\mu(\sigma(y)x)f(y\sigma(x)}{2}\\
&=\frac{f(xy)-\mu(y)f(\sigma(y)x)}{2}+\mu(xy)\frac{f(\sigma(x)\sigma(y))-\mu(\sigma(y))f(y\sigma(x))}{2}\\
&=\frac{g(x)h(y)}{2}+\frac{g^{*}(x)\mu(y)h(\sigma(y))}{2},\end{split}\end{equation*}
which implies, since $h^{*}=-h$, that
$f^{e}(xy)-\mu(y)f^{e}(\sigma(y)x)=\dfrac{[g(x)-g^{*}(x)]h(y)}{2}$.
So that $f^{e}(xy)-\mu(y)f^{e}(\sigma(y)x)=g^{o}(x)h(y)$ for all
$x,y\in S$. This is part (3).\\
(4) By subtracting (\ref{eq72-2}) from (\ref{EQ1}) a small
computation shows that
$$f^{o}(xy)-\mu(y)f^{o}(\sigma(y)x)=g^{e}(x)h(y)$$ for all $x,y\in
S$. This is the result (4).\\
(5) By applying (\ref{eq72-3}) to the pair $(\sigma(y),x)$ and
multiplying the identity obtained by $\mu(y)$ we get that
$$\mu(y)f^{o}(\sigma(y)x)-\mu(xy)f^{o}(\sigma(x)\sigma(y))=\mu(y)g^{e}(\sigma(y))h(x).$$
Since $\mu f^{o}\circ\sigma=-f^{o}$ and $\mu g^{e}\circ\sigma=g^{e}$
the identity above implies that
\begin{equation*}\mu(y)f^{o}(\sigma(y)x)+f^{o}(xy)=g^{e}(y)h(x).\end{equation*}
When we add this to (\ref{eq72-3}) we obtain the functional equation
(\ref{eq72-4}).\\
(6) By applying (\ref{eq72-2}) to the pair $(x,\sigma(y))$ and
multiplying the identity obtained by $\mu(y)$, and using the fact
that $\mu(y\sigma(y))=1$ and $h^{*}=-h$, we get that
\begin{equation*}\mu(y)f^{e}(x\sigma(y))-f^{e}(yx)=-g^{o}(x)h(y).\end{equation*}
The identity (\ref{eq72-2}) applied to the pair $(y,x)$ gives
\begin{equation*}f^{e}(yx)-\mu(x)f^{e}\circ\sigma(x\sigma(y))=g^{o}(y)h(x).\end{equation*}
By adding the two last identities, and seeing that $\mu
f^{e}\circ\sigma=f^{e}$ and $\mu(y\sigma(y))=1$ for all $y\in S$, we
obtain
\begin{equation*}\begin{split}&g^{o}(y)h(x)-g^{o}(x)h(y)=\mu(y)f^{e}(x\sigma(y))-\mu(x)f^{e}\circ\sigma(x\sigma(y))\\
&=\mu(y)f^{e}(x\sigma(y))-\mu(x)\frac{f^{e}(x\sigma(y))}{\mu(x\sigma(y))}\\
&=\mu(y)f^{e}(x\sigma(y))-\mu(y)f^{e}(x\sigma(y))\\
&=0\end{split}\end{equation*} for all $x,y\in S$. So $g^{o}$ and $h$
are linearly dependent. As $h\neq0$ by assumption, we get that there
exists a constant $b\in\mathbb{C}$ such that $g^{o}=bh$, which
occurs in part (6). This completes the proof of Lemma \ref{lem62}.
\end{proof}
\par In theorem \ref{thm63} we solve the functional equation (\ref{EQ1}) on
semigroups generated by their squares.
\begin{thm}\label{thm63}The solutions $f,g,h:S\to\mathbb{C}$ of the functional equation
(\ref{EQ1}) can be listed as
follows:\\
(1) $f=\theta$, $g=0$ and $h$ is arbitrary, where $\theta\in\mathcal{N}_{\mu}(\sigma,S)$.\\
(2)$f=\theta$, $g$ is arbitrary and $h=0$, where $\theta\in\mathcal{N}_{\mu}(\sigma,S)$. \\
(3)
$f=\theta+\alpha\frac{\chi+\chi^{*}}{2}+\beta\frac{\chi-\chi^{*}}{2}$
and $g\otimes
h=2[\beta\frac{\chi+\chi^{*}}{2}+\alpha\frac{\chi-\chi^{*}}{2}]\otimes\frac{\chi-\chi^{*}}{2}$
where $\alpha,\beta\in\mathbb{C}$ are constants,
$\theta\in\mathcal{N}_{\mu}(\sigma,S)$ and $\chi:S\to\mathbb{C}$ is
a multiplicative function such that $(\alpha,\beta)\neq(0,0)$ and
$\chi^{*}\neq\chi$.\\
(4)\[ \left\{
\begin{array}{r c l}
f&=&\theta+\frac{\alpha}{2}\chi\,A+\frac{\beta}{4}\chi\,A^{2},\quad$$g=\alpha\chi+\beta\chi\,A$$,
\quad$$h=\chi\,A$$\quad\text{on}\quad$$S\setminus I_{\chi},$$\\
f&=&\theta,\quad$$g=h=0$$\quad\text{on}\quad$$I_{\chi},$$\\
\end{array}
\right.
\]
where $\alpha,\beta\in \mathbb{C}$ are constants,
$\theta\in\mathcal{N}_{\mu}(\sigma,S)$, $\chi:S\to\mathbb{C}$ is a
nonzero multiplicative function and $A:S\setminus
I_{\chi}\to\mathbb{C}$ is a nonzero additive function such that
$(\alpha,\beta)\neq(0,0)$,
$\chi^{*}=\chi$ and $A\circ\sigma=-A$.\\
\end{thm}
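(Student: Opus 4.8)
The plan is to split the argument into the degenerate cases $g=0$ or $h=0$ and the main case $g\neq 0$, $h\neq 0$, and then lean heavily on the structural facts already extracted in Lemma~\ref{lem62}. If $g=0$, then the right-hand side of~(\ref{EQ1}) vanishes, so $f\in\mathcal{N}_{\mu}(\sigma,S)$ and $h$ is unconstrained; this is case~(1). Symmetrically, if $h=0$ the equation again forces $f\in\mathcal{N}_{\mu}(\sigma,S)$ with $g$ arbitrary, giving case~(2). The conversely direction for these two cases is immediate from the definition of the nullspace. So the whole content lies in the case where both $g$ and $h$ are nonzero, which I now assume.

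Under $g,h\neq 0$, Lemma~\ref{lem62} already gives me the backbone: $h^{*}=-h$ with $h$ central (part~2), the identity~(\ref{eq72-1}) expressing $h(xy)$ via $g^{*},l,g,l^{*}$ (part~1), and a constant $b$ with $g^{o}=bh$ (part~6). The idea is to feed these into the $\mu$-sine subtraction law machinery of Section~3. Specifically, I would first show that the pair $(k,l):=(h,l)$, or a suitable normalization of it, satisfies~(\ref{Eq6-1}); the antisymmetry $h^{*}=-h$ together with~(\ref{eq72-1}) should rewrite as exactly the $\mu$-sine subtraction law $\mu(y)h(x\sigma(y))=h(x)l(y)-h(y)l(x)$ after applying $*$ in one slot and using $\mu(y\sigma(y))=1$. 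Once that is in place, Proposition~\ref{prop61} dichotomizes $h$: either $h=c_1\frac{\chi-\chi^{*}}{2}$ with $\chi^{*}\neq\chi$ (feeding case~3), or $h=\chi A$ on $S\setminus I_\chi$ and $0$ on $I_\chi$ with $\chi^{*}=\chi$, $A\circ\sigma=-A$ (feeding case~4).

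From there I would reconstruct $g$ and $f$. For $g$, write $g=g^{e}+g^{o}$; part~(6) gives $g^{o}=bh$, and I must pin down $g^{e}$ by substituting the known form of $h$ back into~(\ref{eq72-4}) or into the original equation and matching multiplicative/additive components. In the $\chi\neq\chi^{*}$ branch this should force $g^{e}$ to be a combination of $\frac{\chi+\chi^{*}}{2}$; collecting $g=g^{e}+bh$ and absorbing constants into $\alpha,\beta$ yields the tensor identity $g\otimes h = 2[\beta\frac{\chi+\chi^{*}}{2}+\alpha\frac{\chi-\chi^{*}}{2}]\otimes\frac{\chi-\chi^{*}}{2}$. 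In the $\chi=\chi^{*}$ branch, the additive function $A$ propagates: $g=\alpha\chi+\beta\chi A$ on $S\setminus I_\chi$, and $g=0$ on $I_\chi$ since $h$ vanishes there and~(\ref{eq72-4}) controls $g^{e}$ on the ideal. To recover $f$, I integrate~(\ref{eq72-2}) and~(\ref{eq72-3}): with $g^{o}$ and $g^{e}$ now explicit, these become inhomogeneous equations for $f^{e}$ and $f^{o}$ whose particular solutions are the displayed expressions ($\alpha\frac{\chi+\chi^{*}}{2}+\beta\frac{\chi-\chi^{*}}{2}$ in case~3, and $\frac{\alpha}{2}\chi A+\frac{\beta}{4}\chi A^{2}$ in case~4), with the homogeneous ambiguity being precisely an element $\theta\in\mathcal{N}_{\mu}(\sigma,S)$.

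The main obstacle I anticipate is twofold. First, verifying that the relevant pair genuinely satisfies~(\ref{Eq6-1}) requires careful bookkeeping with $\mu$, the automorphism $\sigma$, and the $*$ operation, since there is no identity element to evaluate at and every ``plug in $e$'' shortcut from the monoid papers is unavailable; I expect to use the generated-by-squares hypothesis (via Lemma~\ref{lem61}) to transfer identities from products to all of $S$. Second, in case~(4) the behavior on the ideal $I_\chi$ must be handled separately, checking that $f=\theta$, $g=h=0$ there is consistent with~(\ref{EQ1}) when one argument lies in $I_\chi$ and the other does not; this boundary matching, together with confirming that the guessed particular solution for $f$ (notably the $\chi A^{2}$ term) actually satisfies~(\ref{eq72-3}) after using $A\circ\sigma=-A$, is where the computation is most delicate. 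The converse direction, inserting each listed form back into~(\ref{EQ1}), is routine but should be stated to close the proof.
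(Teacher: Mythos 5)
Your outline follows the paper's strategy in broad strokes (degenerate cases, Lemma \ref{lem62}, Proposition \ref{prop61}, then reconstruction of $g$ and $f$), but it has a genuine gap at exactly the point where the real work lies: the derivation of the $\mu$-sine subtraction law for $h$. From (\ref{eq72-1}), $h(xy)=g^{*}(x)l(y)-g(y)l^{*}(x)$, substituting $y\mapsto\sigma(y)$ and multiplying by $\mu(y)$ gives
$$\mu(y)h(x\sigma(y))=g^{*}(x)l^{*}(y)-g^{*}(y)l^{*}(x),$$
whose right-hand side is built from $g^{*}$, not from $h$; it is \emph{not} the law $\mu(y)h(x\sigma(y))=h(x)l(y)-h(y)l(x)$ that you claim, and applying $*$ in one slot cannot fix this, because $g^{*}=g^{e}-g^{o}$ contains the even part $g^{e}$, which in general is not a multiple of $h$. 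The paper gets around this by a device absent from your proposal: since the triple $(f^{e},g^{o},h)$ again solves (\ref{EQ1}) by (\ref{eq72-2}), Lemma \ref{lem62}(1) is \emph{re-applied to this derived triple}, and then $g^{o}=bh$ converts the resulting identity into a genuine subtraction law $\mu(y)h(x\sigma(y))=h(x)m(y)-h(y)m(x)$ with $m:=-bL^{*}$. Crucially, this re-application is legitimate only when $g^{o}\neq0$, i.e.\ $b\neq0$.

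That exposes the second, larger omission: the case $b=0$ (equivalently, $g$ even), which your proposal never treats and which the sine subtraction machinery cannot handle at all — when $g^{o}=0$, equation (\ref{eq72-2}) degenerates to the bare statement $f^{e}\in\mathcal{N}_{\mu}(\sigma,S)$ and yields no law for $h$. The paper devotes its entire Case B to this situation: one proves $l^{e}=cg$ using centrality of $h$, derives $g(xy)+\mu(y)g(x\sigma(y))=\lambda g(x)g(y)$, and finally shows that $(h,\frac{\lambda}{2}g)$ satisfies the sine \emph{addition} law, which is resolved by \cite[Proposition 1.1]{Bouikhalene and Elqorachi} rather than by Proposition \ref{prop61}. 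Without this branch your argument misses precisely the solutions with $g$ even — those of part (3) with $\alpha=0$ and of part (4) with $\beta=0$. A smaller but real issue is your determination of $g^{e}$ in the case $b\neq0$: ``matching multiplicative/additive components'' is not a proof; the paper needs the identities (\ref{eq7-15})--(\ref{eq7-16}) together with the linear independence of $h$ and $m$ to obtain $g^{e}=c_{3}m^{e}$ before the dichotomy of Proposition \ref{prop61} can be exploited.
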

\begin{proof} We check by elementary computations that if $f,g$ and $h$
are of the forms (1)-(4) then $(f,g,h)$ is a solution of (\ref{EQ1})
, so left is that any solution $(f,g,h)$ of (\ref{EQ1}) fits into
(1)-(4).\\Let $f,g,h:S\to\mathbb{C}$ satisfy the functional equation
(\ref{EQ1}). The parts (1)and (2) are trivial. So, in what remains
of the proof we assume that $g\neq0$ and $h\neq0$. According to
Lemma \ref{lem62}(6) there exists a constant $b\in\mathbb{C}$ such
that
\begin{equation}\label{eq7-13}g^{o}=bh.\end{equation} We split the discussion into
the cases $b\neq0$ and $b=0$.\\
\underline{Case A}: $b\neq0$. Then $g^{0}\neq0$. From (\ref{eq72-2})
we get that the triple $(f^{e},g^{o},h)$ satisfies the functional
equation (\ref{EQ1}). Hence, according to Lemma \ref{lem62}(1),
there exists a function $L:S\to\mathbb{C}$ such that
\begin{equation*}h(xy)=\mu(x)g^{o}(\sigma(x))L(y)-g^{o}(y)L^{*}(x),\end{equation*}
for all $x,y\in S$. Since $\mu g^{o}\circ\sigma=-g^{o}$ we get,
taking (\ref{eq7-13}) into account, that
\begin{equation*}h(xy)=-bh(x)L(y)-b\mu(x)h(y)L(\sigma(x)).\end{equation*}
Applying this to the pair $(x,\sigma(y))$ and multiplying the
identity obtain by $\mu(y)$, and taking into account that $\mu
h\circ\sigma=-h$, we obtain
\begin{equation*}\mu(y)h(x\sigma(y))=h(x)(-bL^{*}(y))-h(y)(-bL^{*}(x)),\end{equation*}
for all $x,y\in S$. Defining $m:=-bL^{*}$ we deduce from the
identity above that the pair $(h,m)$ satisfies the $\mu$-subtraction
law
\begin{equation}\label{eq7-14}\mu(y)h(x\sigma(y))=h(x)m(y)-h(y)m(x),\end{equation}
for all $x,y\in S$.\\
On the other hand, the identity (\ref{eq72-3}) implies that the
triple $(f^{o},g^{e},h)$ satisfies the functional equation
(\ref{EQ1}). Let $x,y,z\in S$ be arbitrary. As in the proof of Lemma
\ref{lem62} we deduce that the pair $(g^{e},h)$ satisfies the
identity (\ref{eq7-9}), i.e.,
\begin{equation*}g^{e}(x)h(yz)+\mu(yz)g^{e}(\sigma(y))h(\sigma(z)x)+\mu(xy)g^{e}(z)h(\sigma(xy))=0,\end{equation*}
for all $x,y,z\in S$. Sine $\mu g^{e}\circ\sigma=g^{e}$ and
$h^{*}=-h$ and $h$ is central, we get from the identity above that
\begin{equation*}g^{e}(x)h(zy)=g^{e}(z)h(xy)-\mu(z)g^{e}(y)h(x\sigma(z)),\end{equation*}
which implies, by replacing $y$ by $\sigma(y)$, multiplying both
sides by $\mu(y)$ and using that $\mu g^{e}\circ\sigma=g^{e}$, that
\begin{equation*}g^{e}(x)\mu(y)h(z\sigma(y))=g^{e}(z)\mu(y)h(x\sigma(y))-g^{e}(y)\mu(z)h(x\sigma(z)).\end{equation*}
Using that the pair $(h,m)$ satisfies the $\mu$-subtraction law we
get from the last identity that
$g^{e}(x)[h(z)m(y)-h(y)m(z)]=g^{e}(z)[h(x)m(y)-h(y)m(x)]-g^{e}(y)[h(x)m(z)-h(z)m(x)]$.
So that
\begin{equation}\label{eq7-15}\begin{split}&g^{e}(x)[h(z)m(y)-h(y)m(z)]=[g^{e}(z)m(y)-g^{e}(y)m(z)]h(x)\\
&+[g^{e}(y)h(z)-g^{e}(z)h(y)]m(x).\end{split}\end{equation} Now, let
$y,z\in S$ be arbitrary in (\ref{eq7-15}). Since the function
$g^{e}$ is even and $h$ is odd, we get from (\ref{eq7-15}) that
\begin{equation}\label{eq7-16}g^{e}(x)[h(z)m(y)-h(y)m(z)]=[g^{e}(y)h(z)-g^{e}(z)h(y)]m^{e}(x)\end{equation}
for all $x\in S$. Since $h\neq0$ we get, seeing the formulas of the
solutions of the $\mu$-subtraction law given by the Proposition
\ref{prop61}, that the functions $h$ and $m$ are linearly
independent.
So that there exist $y_{0},z_{0}\in S$ such that $h(z_{0})m(y_{0})-h(y_{0})m(z_{0})\neq0$.\\
By putting $(y,z)=(y_{0},z_{0})$ in (\ref{eq7-16}) we deduce that
there exists a constant $c_{3}\in\mathbb{C}$ such that
\begin{equation}\label{eq7-17}g^{e}=c_{3}m^{e}.\end{equation}
Moreover, taking into account (\ref{eq7-14}), we get from
Proposition \ref{prop61} that the pair $(h,m)$ falls into two
categories (i) and (ii) that we deal with separately:\\
(i)
\begin{equation}\label{eq7-18}h=c_{1}\frac{\chi-\chi^{*}}{2}\end{equation}
and
\begin{equation}\label{eq7-19}m=\frac{\chi+\chi^{*}}{2}+\alpha\frac{\chi-\chi^{*}}{2},\end{equation}
where $c_{1}\in\mathbb{C}\setminus\{0\}$, $\alpha\in\mathbb{C}$ are
constants and $\chi:S\to\mathbb{C}$ is a multiplicative function
such that $\chi^{*}\neq\chi$. Then $m^{e}=\frac{\chi+\chi^{*}}{2}$.
So, in view of (\ref{eq7-13}) and (\ref{eq7-17}), we get that
\begin{equation}\label{eq7-20}g^{o}=c_{2}\frac{\chi-\chi^{*}}{2},\end{equation}
and
\begin{equation}\label{eq7-21}g^{e}=c_{3}\frac{\chi+\chi^{*}}{2},\end{equation} where $c_{2}:=bc_{1}\in\mathbb{C}\setminus\{0\}$ is a
constant. Hence
\begin{equation}\label{eq7-22}g=c_{3}\frac{\chi+\chi^{*}}{2}+c_{2}\frac{\chi-\chi^{*}}{2}.\end{equation}
By substituting (\ref{eq7-18}) and (\ref{eq7-20}) in (\ref{eq72-2})
we get that
\begin{equation*}\begin{split}&f^{e}(xy)-\mu(y)f^{e}(\sigma(y)x)=\frac{c_{1}c_{2}}{4}[\chi(x)-\mu(x)\chi(\sigma(x))][\chi(y)-\mu(y)\chi(\sigma(y))]\\
&=\frac{c_{1}c_{2}}{4}[\chi(xy)-\mu(y)\chi(\sigma(y)x)-\mu(x)\chi(\sigma(x)y)+\mu(xy)\chi(\sigma(xy))]\\
&=\frac{c_{1}c_{2}}{4}(\chi+\chi^{*})(xy)-\mu(y)\frac{c_{1}c_{2}}{4}(\chi+\chi^{*})(\sigma(y)x),\end{split}\end{equation*}
for all $x,y\in S$. So that
\begin{equation*}(f^{e}-\frac{c_{1}c_{2}}{4}(\chi+\chi^{*}))(xy)-\mu(y)(f^{e}-\frac{c_{1}c_{2}}{4}(\chi+\chi^{*}))(\sigma(y)x)=0,\end{equation*}
for all $x,y\in S$. Hence, the function
$\theta:=f^{e}-\frac{c_{1}c_{2}}{4}(\chi+\chi^{*})$ belongs to
$\mathcal{N}_{\mu}(\sigma,S)$ and we obtain
\begin{equation}\label{eq7-23}f^{e}=\theta+\frac{c_{1}c_{2}}{4}(\chi+\chi^{*}).\end{equation}
On the other hand, by substituting (\ref{eq7-18}) and (\ref{eq7-21})
in (\ref{eq72-3}) we get that
\begin{equation*}\begin{split}&f^{o}(xy)-\mu(y)f^{o}(\sigma(y)x)=\frac{c_{1}c_{3}}{4}[\chi(x)+\mu(x)\chi(\sigma(x))][\chi(y)-\mu(y)\chi(\sigma(y))]\\
&=\frac{c_{1}c_{3}}{4}[\chi(xy)-\mu(y)\chi(\sigma(y)x)+\mu(x)\chi(\sigma(x)y)-\mu(xy)\chi(\sigma(xy))]\\
&=\frac{c_{1}c_{3}}{4}(\chi-\chi^{*})(xy)-\mu(y)\frac{c_{1}c_{3}}{4}(\chi-\chi^{*})(\sigma(y)x),\end{split}\end{equation*}
for all $x,y\in S$. So that
\begin{equation*}(f^{o}-\frac{c_{1}c_{3}}{4}(\chi-\chi^{*}))(xy)-\mu(y)(f^{o}-\frac{c_{1}c_{3}}{4}(\chi-\chi^{*}))(\sigma(y)x)=0,\end{equation*}
for all $x,y\in S$. Hence the function
$f^{o}-\frac{c_{1}c_{3}}{4}(\chi-\chi^{*})$ belongs to
$\mathcal{N}_{\mu}(\sigma,S)$. As
$f^{o}-\frac{c_{1}c_{3}}{4}(\chi-\chi^{*})$ is an odd function we
derive, according to Lemma \ref{lem621}(2), that
$f^{o}-\frac{c_{1}c_{3}}{4}(\chi-\chi^{*})=0$ and then
\begin{equation*}f^{o}=\frac{c_{1}c_{3}}{4}(\chi-\chi^{*}).\end{equation*}
Combining this with (\ref{eq7-22}) and using the fact that
$f=f^{e}+f^{o}$ we obtain
\begin{equation*}f=\theta+\frac{c_{1}}{2}[c_{2}\frac{\chi+\chi^{*}}{2}+c_{3}\frac{\chi-\chi^{*}}{2}],\end{equation*}
which gives with (\ref{eq7-18}) and (\ref{eq7-22}), by putting
$\alpha:=\frac{c_{1}c_{2}}{2}$ and $\beta:=\frac{c_{1}c_{3}}{2}$,
the result in
part (3) of Theorem \ref{thm63}.\\
(ii)
\begin{equation}\label{eq7-24}h=\chi\,A\quad\text{on}\quad S\setminus I_{\chi}
\quad\text{and}\quad h=0\quad\text{on}\quad I_{\chi}\end{equation}
and
\begin{equation}\label{eq7-25}m=\chi(1+cA)\quad\text{on}\quad S\setminus I_{\chi}
\quad\text{and}\quad m=0\quad\text{on}\quad I_{\chi},\end{equation}
where $c\in\mathbb{C}$ is a constant, $\chi:S\to\mathbb{C}$ is a
nonzero multiplicative function and $A:S\setminus
I_{\chi}\to\mathbb{C}$ is a nonzero additive function such that
$\chi^{*}=\chi$ and $A\circ\sigma=-A$. As in (ii) of the proof of
Proposition \ref{prop61} we have $\sigma(S\setminus
I_{\chi})=S\setminus I_{\chi}$ and $\sigma(I_{\chi})=I_{\chi}$.
Hence, by a small computation using that $\chi^{*}=\chi$ and
$A\circ\sigma=-A$, we get from (\ref{eq7-25}) that $m^{e}=\chi$. So,
in view of (\ref{eq7-13}) and (\ref{eq7-17}), we get that
\begin{equation}\label{eq7-26}g^{o}=c_{2}\chi\,A\quad\text{on}\quad S\setminus I_{\chi}
\quad\text{and}\quad g^{o}=0\quad\text{on}\quad
I_{\chi},\end{equation} and
\begin{equation}\label{eq7-27}g^{e}=c_{3}\chi,\end{equation} where $c_{2}:=b\in\mathbb{C}\setminus\{0\}$ is a
constant. Hence
\begin{equation}\label{eq7-28}g=c_{3}\chi+c_{2}\chi\,A\quad\text{on}\quad S\setminus I_{\chi}
\quad\text{and}\quad g=0\quad\text{on}\quad I_{\chi}.\end{equation}
Recall that $\sigma(S\setminus I_{\chi})=S\setminus I_{\chi}$ and
$\sigma(I_{\chi})=I_{\chi}$. Then
\par on the subsemigroup $S\setminus I_{\chi}$, we get, by using (\ref{eq7-26}),
(\ref{eq7-24}) and (\ref{eq72-2}), that
\begin{equation*}\begin{split}&f^{e}(xy)-\mu(y)f^{e}(\sigma(y)x)=c_{2}\chi(xy)A(x)A(y)\\
&=\frac{c_{2}}{4}[A^{2}(xy)-A^{2}(\sigma(y)x)]\chi(xy)\\
&=\frac{c_{2}}{4}\chi(xy)A^{2}(xy)-\frac{c_{2}}{4}\mu(y)\chi(\sigma(y)x)A^{2}(\sigma(y)x),\end{split}\end{equation*}
for all $x,y\in S\setminus I_{\chi}$, which implies that
$$(f^{e}-\frac{c_{2}}{4}\chi\,A^{2})(xy)-\mu(y)(f^{e}-\frac{c_{2}}{4}\chi\,A^{2})(\sigma(y)x)=0$$
for all $x,y\in S\setminus I_{\chi}$.\\
Defining $\theta_{1}:=f^{e}-\frac{c_{2}}{4}\chi\,A^{2}$ we see that
$\theta_{1}\in\mathcal{N}_{\mu}(\sigma,S\setminus I_{\chi})$ and
\begin{equation}\label{eq-29}f^{e}=\theta_{1}+\frac{c_{2}}{4}\chi\,A^{2}\quad\text{on}\quad S\setminus I_{\chi}.\end{equation}
Similarly, by using (\ref{eq7-27}), (\ref{eq7-24}) and
(\ref{eq72-3}), we get that
\begin{equation*}\begin{split}&f^{o}(xy)-\mu(y)f^{o}(\sigma(y)x)=c_{}\chi(xy)A(y)\\
&=\frac{c_{3}}{2}\chi(xy)[A(x)+A(y)-A(\sigma(y)x)]\\
&=\frac{c_{3}}{2}\chi(xy)A(xy)-\frac{c_{3}}{2}\mu(y)\chi(\sigma(y)x)A(\sigma(y)x),\end{split}\end{equation*}
for all $x,y\in S\setminus I_{\chi}$, which implies that
$$(f^{o}-\frac{c_{3}}{2}\chi\,A)(xy)-\mu(y)(f^{o}-\frac{c_{3}}{2}\chi\,A)(\sigma(y)x)=0$$
for all $x,y\in S\setminus I_{\chi}$. Hence
$f^{o}-\frac{c_{3}}{2}\chi\,A\in\mathcal{N}_{\mu}(\sigma,S\setminus
I_{\chi})$. As $\chi^{*}=\chi$ and $A\circ\sigma=-A$ we get that the
function $f^{o}-\frac{c_{3}}{2}\chi\,A$ is odd on the subsemigroup
$S\setminus I_{\chi}$, which is also generated by its squares. Thus,
according to Lemma \ref{lem621}(2), $f^{o}-\frac{c_{3}}{2}\chi\,A=0$
on $S\setminus I_{\chi}$ and then
\begin{equation}\label{eq-30}f^{o}=\frac{c_{3}}{2}\chi\,A\quad\text{on}\quad S\setminus I_{\chi}.\end{equation}
Hence, from (\ref{eq-29}) and (\ref{eq-30}) we deduce that
\begin{equation}\label{eq-31}f=\theta_{1}+\frac{c_{3}}{2}\chi\,A+\frac{c_{2}}{4}\chi\,A^{2}\quad\text{on}\quad S\setminus I_{\chi}.\end{equation}
\par On the subsemigroup $I_{\chi}$ we have $h=0$ by (\ref{eq7-24}).
Then we get for $x\in S$ and $y\in I_{\chi}$ that
$$f(xy)-\mu(y)f(\sigma(y)x)=g(x)h(y)=g(x)\cdot0=0,$$
which implies that
\begin{equation}\label{eq-311}\theta_{2}:=f|_{I_{\chi}}\in\mathcal{N}_{\mu}(\sigma,I_{\chi}).\end{equation} Defining
$\theta:=\theta_{1}\cup\theta_{2}$ we see that
$\theta\in\mathcal{N}_{\mu}(\sigma,S)$. Indeed, for $x,y\in S$,
\par If $xy\in S\setminus I_{\chi}$, then $x,y\in S\setminus
I_{\chi}$ and $\sigma(y)x\in S\setminus I_{\chi}$ because
$\chi^{*}=\chi$. So that
$$\theta(xy)-\mu(y)\theta(\sigma(y)x)=\theta_{1}(xy)-\mu(y)\theta_{1}(\sigma(y)x)=0.$$
\par If $xy\in I_{\chi}$, then $x\in I_{\chi}$ or $y\in I_{\chi}$,
 and $\sigma(y)x\in I_{\chi}$ because $\chi^{*}=\chi$. So, taking (\ref{eq7-24}), (\ref{eq7-28}) and (\ref{eq-311}), we get that
\begin{equation*}\begin{split}&\theta(xy)-\mu(y)\theta(\sigma(y)x)=\theta_{2}(xy)-\mu(y)\theta_{2}(\sigma(y)x)\\&=f(xy)-\mu(y)f(\sigma(y)x)=g(x)h(y)=0.\end{split}\end{equation*}
Now, by using (\ref{eq-31}) and (\ref{eq-311}), we obtain
\begin{equation}\label{eq7-35}f=\theta+\frac{c_{3}}{2}\chi\,A+\frac{c_{2}}{4}\chi\,A^{2}\quad\text{on}\quad S\setminus I_{\chi}
\quad\text{and}\quad f=\theta\quad\text{on}\quad
I_{\chi}.\end{equation} Combining (\ref{eq7-24}), (\ref{eq7-28}) and
(\ref{eq7-35}), and putting $\alpha:=c_{3}$ and $\beta:=c_{2}$, we obtain part (4) of Theorem \ref{thm63}.\\
\underline{Case B}: $b=0$. In this case we have, taking
(\ref{eq7-13}) into account, $g^{o}=0$. So, the functional equation
(\ref{eq72-2}) becomes
$$f^{e}(xy)-\mu(y)f^{e}(\sigma(y)x)=0,$$
for all $x,y\in S$. Hence, by defining $\theta:=f^{e}$ we have
\begin{equation}\label{eq7-36}f^{e}=\theta,\end{equation}
with $\theta\in\mathcal{N}_{\mu}(\sigma,S)$.\\
Moreover $g=g^{e}+g^{o}=g^{e}$. Then $g$ is an even function, i.e.,
$\mu g\circ\sigma=g$. Hence the identity (\ref{eq72-1}) reduces to
\begin{equation}\label{eq7-37}h(xy)=g(x)l(y)-g(y)l^{*}(x),\end{equation}
for all $x,y\in S$. According to Lemma \ref{lem62}(2) $h$ is
central, so (\ref{eq7-37}) implies that
$$g(x)l(y)-g(y)l^{*}(x)=g(y)l(x)-g(x)l^{*}(y),$$
for all $x,y\in S$. So,
$$g(x)[l(y)+l^{*}(y)]=g(y)[l(x)+l^{*}(x)],$$
for all $x,y\in S$, which implies that the functions $g$ and $l^{e}$
are linearly dependent. Since $g\neq0$ we deduce that there exists a
constant $c\in\mathbb{C}$ such that
\begin{equation}\label{eq7-38}l^{e}=cg.\end{equation}
Now, let $x,y\in S$ be arbitrary. By applying (\ref{eq7-37}) to the
pair $(\sigma(y),x)$, multiplying the identity obtained by $\mu(y)$
and taking into account that $\mu g\circ\sigma=g$ and
$\mu(y\sigma(y))=1$, we get that
\begin{equation}\label{eq7-39}\mu(y)h(\sigma(y)x)=g(y)l(x)-g(x)l(y).\end{equation}
By subtracting (\ref{eq7-39}) from (\ref{eq7-37}) we obtain
\begin{equation*}\begin{split}&h(xy)-\mu(y)h(\sigma(y)x)=g(x)l(y)-g(y)l^{*}(x)-g(y)l(x)+g(x)l(y)\\
&=2g(x)l(y)-g(y)[l(x)+l^{*}(x)]\\
&=2g(x)l(y)-2g(y)l^{e}(x),\end{split}\end{equation*} which implies,
using (\ref{eq7-38}), that
\begin{equation*}\begin{split}&h(xy)-\mu(y)h(\sigma(y)x)=2g(x)l(y)-2cg(x)g(y)\\
&=2g(x)[l(y)-cg(y)]=2g(x)[l(y)-l^{e}(y)].\end{split}\end{equation*}
So, $x$ and $y$ being arbitrary, the triple $(h,2g,l^{o})$ satisfies
the functional equation (\ref{EQ1}), i.e.,
\begin{equation}\label{eq7-40}h(xy)-\mu(y)h(\sigma(y)x)=2g(x)l^{o}(y),\end{equation}
for all $x,y\in S$.\\
By using the fact that $\mu(x\sigma(x))=1$ for all $x\in S$, $\mu
f^{o}\circ\sigma=-f^{o}$ and $g=g^{e}$ the functional equation
(\ref{eq72-3}) becomes
\begin{equation}\label{eq7-41}f^{o}(xy)+\mu(x)f^{o}(y\sigma(x))=g(x)h(y)\end{equation}
for all $x,y\in S$. Since $h\neq0$ there exists $y_{0}\in S$ such
that $h(y_{0})\neq0$. Let $x,y\in S$ be arbitrary. When we apply
(\ref{eq7-41}) to the pairs $(xy,y_{0})$ and $(x\sigma(y),y_{0})$ we
deduce that
$$g(xy)=\frac{1}{h(y_{0})}[f^{o}(xyy_{0})+\mu(xy)f^{o}(y_{0}\sigma(x)\sigma(y))]$$
and
$$g(x\sigma(y))=\frac{1}{h(y_{0})}[f^{o}(x\sigma(y)y_{0})+\mu(x\sigma(y))f^{o}(y_{0}\sigma(x)y)],$$
which implies, taking into account that $\mu$ is multiplicative and
$\mu(y\sigma(y))=1$, that
\begin{equation}\label{eq7-42}\begin{split}&g(xy)+\mu(y)g(x\sigma(y))=\frac{1}{h(y_{0})}[f^{o}(xyy_{0})+\mu(y)f^{o}(x\sigma(y)y_{0})\\&+
\mu(xy)f^{o}(y_{0}\sigma(x)\sigma(y))+\mu(x)f^{o}(y_{0}\sigma(x)y)].\end{split}\end{equation}
The identity (\ref{eq72-4}) shows that $f^{o}$ is central. Hence, by
applying (\ref{eq7-41}), we get that
\begin{equation}\label{eq7-43}f^{o}(xyy_{0})+\mu(y)f^{o}(x\sigma(y)y_{0})=f^{o}(yy_{0}x)+\mu(y)f^{o}(y_{0}x\sigma(y))=g(y)h(y_{0}x).\end{equation}
Similarly,
\begin{equation*}\begin{split}&\mu(xy)f^{o}(y_{0}\sigma(x)\sigma(y))+\mu(x)f^{o}(y_{0}\sigma(x)y)\\
&=\mu(x)[f^{o}(yy_{0}\sigma(x))+\mu(y)f^{o}(y_{0}\sigma(x)\sigma(y))].\end{split}\end{equation*}
So, by using (\ref{eq7-41}), we get that
\begin{equation}\label{eq7-44}\mu(xy)f^{o}(y_{0}\sigma(x)\sigma(y))+\mu(x)f^{o}(y_{0}\sigma(x)y)=\mu(x)g(y)h(y_{0}\sigma(x)).\end{equation}
From (\ref{eq7-42}), (\ref{eq7-43}) and (\ref{eq7-44}) we obtain
\begin{equation}\label{eq7-45}g(xy)+\mu(y)g(x\sigma(y))=\frac{1}{h(y_{0})}g(y)[h(y_{0}x)+\mu(x)h(y_{0}\sigma(x))].\end{equation}
Moreover, by applying (\ref{eq7-37}) to the pairs $(y_{0},x)$ and
$(y_{0},\sigma(x))$ we get that
\begin{equation*}h(y_{0}x)=g(y_{0})l(x)-g(x)l^{*}(y_{0})\end{equation*}
and
\begin{equation*}h(y_{0}\sigma(x))=g(y_{0})l(\sigma(x))-g(\sigma(x))l^{*}(\sigma(y_{0})).\end{equation*}
By adding the first identity above to the second one multiplied  by
$\mu(x)$, and seeing that $g^{*}(x)=g(x)$, we obtain
\begin{equation*}\begin{split}&h(y_{0}x)+\mu(x)h(y_{0}\sigma(x))=g(y_{0})[l(x)+l^{*}(x)]-2l^{*}(\sigma(y_{0}))g(x)\\
&=2g(y_{0})l^{e}(x)-2l^{*}(\sigma(y_{0}))g(x),\end{split}\end{equation*}
which implies, taking (\ref{eq7-38}) into account, that
\begin{equation*}\begin{split}&h(y_{0}x)+\mu(x)h(y_{0}\sigma(x))=2cg(y_{0})g(x)-2l^{*}(\sigma(y_{0}))g(x)\\
&=2g(x)[cg(y_{0})-l^{*}(y_{0})]\\
&=2g(x)[l^{e}(y_{0})-l^{*}(y_{0})].\end{split}\end{equation*}Thus
\begin{equation*}h(y_{0}x)+\mu(x)h(y_{0}\sigma(x))=2l^{o}(y_{0})g(x).\end{equation*}
Combining this with (\ref{eq7-45}) we get that
\begin{equation}\label{eq7-46}g(xy)+\mu(y)g(x\sigma(y))=\lambda g(x)g(y)\end{equation}
for all $x,y\in S$, where
$\lambda:=\frac{2l^{o}(y_{0})}{h(y_{0})}$.\\
On the other hand, since $g\neq0$ there exists $x_{0}\in S$ such
that $g(x_{0})\neq0$. Let $x,y\in S$ be arbitrary. By applying
(\ref{eq7-41}) to the pair $(x_{0},xy)$ and using that $f^{o}$ is
central we get that
$$2h(xy)=\frac{1}{g(x_{0})}[2f^{o}(xyx_{0})+2\mu(x_{0})f^{o}(xy\sigma(x_{0}))].$$
By applying the identity (\ref{eq72-4}) to the pairs $(x,yx_{0})$
and $(x,y\sigma(x_{0}))$, and using that $g^{e}=g$ and Lemma
\ref{lem62}(2), the identity above gives
\begin{equation*}\begin{split}&2h(xy)=\frac{1}{g(x_{0})}[g(x)h(yx_{0})+h(x)g(yx_{0})+\mu(x_{0})g(x)h(y\sigma(x_{0}))\\
&+\mu(x_{0})h(x)g(y\sigma(x_{0}))]\\
&=\frac{1}{g(x_{0})}\{g(x)[h(yx_{0})+\mu(x_{0})h(y\sigma(x_{0}))]+h(x)[g(yx_{0})+\mu(x_{0})g(y\sigma(x_{0}))]\}\\
&=\frac{1}{g(x_{0})}\{g(x)[h(yx_{0})+\mu(y)\mu(x_{0}\sigma(y))h\circ\sigma(x_{0}\sigma(y))]+h(x)[g(yx_{0})+\mu(x_{0})g(y\sigma(x_{0}))]\}\\
&=\frac{1}{g(x_{0})}\{g(x)[h(yx_{0})-\mu(y)h(x_{0}\sigma(y))]+h(x)[g(yx_{0})+\mu(x_{0})g(y\sigma(x_{0}))]\},\end{split}\end{equation*}
which implies, by using (\ref{eq7-40}) and (\ref{eq7-46}), that
\begin{equation*}\begin{split}&2h(xy)=\frac{1}{g(x_{0})}[2g(x_{0})g(x)l^{o}(y)+\lambda g(x_{0})h(x)g(y)].\end{split}\end{equation*}
So, $x$ and $y$ being arbitrary, the pair $(h,\frac{\lambda}{2}g)$
satisfies the sine addition law
\begin{equation}\label{eq7-47}h(xy)=h(x)(\frac{\lambda}{2}g(y))+(\frac{\lambda}{2}g(x))h(y)\end{equation}
for all $x,y\in S$. Notice that $\lambda\neq0$ because $h\neq0$ and
$S$ is generated by its squares. According to \cite[Proposition
1.1]{Bouikhalene and Elqorachi}, and proceeding as (i) and (ii) of
the proof of Proposition \ref{prop61}, we get that the pair $(h,g)$
falls into two categories:\\
(i)
\begin{equation}\label{eq7-48}h=c_{1}\frac{\chi-\chi^{*}}{2}\end{equation}
and
\begin{equation}\label{eq7-49}g=c_{2}\frac{\chi+\chi^{*}}{2},\end{equation}
where $c_{1},c_{2}\in\mathbb{C}\setminus\{0\}$ are constants and
$\chi:S\to\mathbb{C}$ is a multiplicative function such that
$\chi^{*}\neq\chi$.\\
By substituting (\ref{eq7-48}) and (\ref{eq7-49}) in the functional
equation (\ref{eq72-3}), and taking into account that $g^{e}=g$, and
proceeding exactly as in part (i) of Case A to compute $f^{o}$, we
obtain
\begin{equation*}f^{o}=\frac{c_{1}c_{2}}{2}\frac{\chi-\chi^{*}}{2},\end{equation*}
which gives with (\ref{eq7-36})
\begin{equation}\label{eq7-50}f=\theta+\frac{c_{1}c_{2}}{2}\frac{\chi-\chi^{*}}{2}.\end{equation}
By putting $\beta:=\frac{c_{1}c_{2}}{2}\in\mathbb{C}\setminus\{0\}$, we get from (\ref{eq7-48}), (\ref{eq7-49}) and (\ref{eq7-50})
the part (3) of Theorem \ref{thm63} corresponding to $\alpha=0$.\\
(ii) \begin{equation}\label{eq7-51}h=\chi\,A\quad\text{on}\quad
S\setminus I_{\chi} \quad\text{and}\quad h=0\quad\text{on}\quad
I_{\chi}\end{equation} and
\begin{equation}\label{eq7-52}g=c_{1}\chi\,A\quad\text{on}\quad S\setminus I_{\chi}
\quad\text{and}\quad g=0\quad\text{on}\quad I_{\chi},\end{equation}
where $c_{1}\in\mathbb{C}\setminus\{0\}$ is a constant,
$\chi:S\to\mathbb{C}$ is a nonzero multiplicative function and
$A:S\setminus I_{\chi}\to\mathbb{C}$ is a nonzero additive function
such that $\chi^{*}=\chi$ and
$A\circ\sigma=-A$.\\
By using (\ref{eq7-51}), (\ref{eq7-52}) and similar computations to
the ones made to compute $f^{o}$ on $S\setminus I_{\chi}$ and
$I_{\chi}$ in part (ii) of Case A, we obtain
\begin{equation*}f^{o}=\frac{c_{1}}{2}\chi\,A\quad\text{on}\quad S\setminus I_{\chi}
\quad\text{and}\quad f^{o}=0\quad\text{on}\quad
I_{\chi},\end{equation*} which gives with (\ref{eq7-36})
\begin{equation}\label{eq7-53}f=\theta+\frac{c_{1}}{2}\chi\,A\quad\text{on}\quad S\setminus I_{\chi}
\quad\text{and}\quad f=\theta\quad\text{on}\quad
I_{\chi}.\end{equation} By putting
$\alpha:=\frac{c_{1}}{2}\in\mathbb{C}\setminus\{0\}$, we get, from
(\ref{eq7-51}), (\ref{eq7-52}) and (\ref{eq7-53}), part (4) of
Theorem \ref{thm63} corresponding to $\beta=0$. This completes the
proof of Theorem \ref{thm63}.
\end{proof}
\begin{rem} Let $\chi:S\to\mathbb{C}$ be a nonzero multiplicative function. If $S=\{xy\,|\,x,y\in S\}$
then $S\setminus I_{\chi}=\{xy\,|\,x,y\in S\setminus I_{\chi}\}$. If
$S$ is a regular semigroup so is the subsemigroup $S\setminus
I_{\chi}$. Then, in view of Remark \ref{rem1} and Remark \ref{rem2},
the proof of Theorem \ref{thm63} works for each type of semigroups.
It follows that our results are valid if $S=\{xy\,|\,x,y\in S\}$,
and if $S$ is a regular semigroup.
\end{rem}

\end{document}